\begin{document}
\theoremstyle{plain}
\newtheorem{Definition}{Definition}[section]
\newtheorem{Proposition}{Proposition}[section]
\newtheorem{Property}{Property}[section]
\newtheorem{Theorem}{Theorem}[section]
\newtheorem{Lemma}[Theorem]{\hspace{0em}\bf{Lemma}}
\newtheorem{Corollary}[Theorem]{Corollary}
\newtheorem{Remark}{Remark}[section]
\newtheorem{Example}{Example}[section]

\baselineskip 15pt

\noindent  {\LARGE Balanced  metrics on the Fock-Bargmann-Hartogs domains}\\\\

\noindent\text{Enchao Bi$^1$, \; Zhiming Feng$^2$ \; \& \; Zhenhan T{u$^1$}$^{*}$ }\\

\noindent\small {${}^1$School of Mathematics and Statistics, Wuhan
University, Wuhan, Hubei 430072, P.R. China} \\

\noindent\small {${}^2$School of Mathematical and Information Sciences, Leshan Normal University, Leshan, Sichuan 614000, P.R. China } \\

\noindent\text{Email:  bienchao@whu.edu.cn (E. Bi), \; fengzm2008@163.com (Z. Feng), \; zhhtu.math@whu.edu.cn (Z. Tu)}

\renewcommand{\thefootnote}{{}}
\footnote{\hskip -16pt {$^{*}$Corresponding author. \\}}
\\

\normalsize \noindent\textbf{Abstract}\quad {The Fock-Bargmann-Hartogs domain $D_{n,m}(\mu)$ ($\mu>0$)
 in $\mathbb{C}^{n+m}$ is defined by the inequality $\|w\|^2<e^{-\mu\|z\|^2},$  where $(z,w)\in \mathbb{C}^n\times \mathbb{C}^m$, which is an unbounded non-hyperbolic domain
 in $\mathbb{C}^{n+m}$.
This paper introduces a K\"{a}hler metric $\alpha g(\mu;\nu)$ $(\alpha>0)$ on $D_{n,m}(\mu)$, where $g(\mu;\nu)$ is the K\"{a}hler metric associated with the K\"{a}hler potential $\Phi(z,w):=\mu\nu{\Vert z\Vert}^{2}-\ln(e^{-\mu{\Vert z\Vert}^{2}}-\Vert w\Vert^2)$ ($\nu>-1$) on $D_{n,m}(\mu)$.
The purpose of this paper is twofold. Firstly, we obtain an explicit formula for the Bergman kernel of the weighted Hilbert space  of square integrable
holomorphic functions on $(D_{n,m}(\mu), g(\mu;\nu))$ with the weight $\exp\{-\alpha \Phi\}$ for $\alpha>0$.
Secondly, using the explicit expression of the Bergman kernel, we obtain the necessary and sufficient condition for the metric $\alpha g(\mu;\nu)$ $(\alpha>0)$ on the domain
$D_{n,m}(\mu)$ to be a balanced metric.
So we obtain
the existence of balanced metrics for a class of Fock-Bargmann-Hartogs domains.

\vskip 10pt

\noindent \textbf{Key words:} Balanced metrics \textperiodcentered \; Bergman
kernels \textperiodcentered \; Fock-Bargmann-Hartogs domains \textperiodcentered \;
K\"{a}hler metrics
\vskip 10pt

\noindent \textbf{Mathematics Subject Classification (2010):} 32A25
  \textperiodcentered \, 32M15  \textperiodcentered \, 32Q15

\setlength{\oddsidemargin}{-.5cm}  
\setlength{\evensidemargin}{\oddsidemargin}
\pagenumbering{arabic}
\renewcommand{\theequation}
{\arabic{section}.\arabic{equation}}

\setcounter{equation}{0}

\section{{Introduction}}
 Assume that $M$ is a complex manifold and $\varphi$
is a strictly plurisubharmonic function on $M$. Let $g$ be a
K\"{a}hler metric on $M$ associated with the K\"{a}hler form
$\omega=\frac{\sqrt{-1}}{2\pi}\partial\overline{\partial}\varphi$.
For $\alpha>0$, let $\mathcal{H}_{\alpha}$ be the weighted Hilbert
space of square integrable holomorphic functions on $(M, g)$ with
the weight $\exp\{-\alpha \varphi\}$, that is,
$$\mathcal{H}_{\alpha}:=\left\{ f\in \textmd{Hol}(M): \int_{M}{\vert f\vert}^2\exp\{-\alpha \varphi\}\frac{\omega^n}{n!}<+\infty\right\},$$
where $\textmd{Hol}(M)$ is the space of holomorphic functions
on $M$. Let $K_{\alpha}(z,\overline{z})$ be the Bergman kernel
(namely, the reproducing kernel) of the Hilbert space
$\mathcal{H}_{\alpha}$ if $\mathcal{H}_{\alpha}\neq \{0\}$. The
Rawnsley's $\varepsilon$-function on $M$ associated with the metric
$g$  is defined by
\begin{equation}\label{eq1.1}
 \varepsilon_{(\alpha,g)}(z):=\exp\{-\alpha \varphi(z)\}K_{\alpha}(z,\overline{z}),\;\; z\in M.
\end{equation}
Note the Rawnsley's $\varepsilon$-function depends only on the
metric $g$ and not on the choice of the K\"{a}hler potential
$\varphi$. The asymptotics of the Rawnsley's $\varepsilon$-function $
\varepsilon_{\alpha}$ was expressed in terms of the parameter
$\alpha$ for compact manifolds by Catlin \cite{Cat} and Zelditch
\cite{Zeld} (for $\alpha\in \mathbb{N}$) and for non-compact
manifolds by Ma-Marinescu \cite{MM07,MM08, MM12}. In some particular case
it was also proved by Engli\v{s} \cite{E1,E2}.

\begin{Definition} The metric $\alpha g$ on $M$ is balanced  if the Rawnsley's
$\varepsilon$-function $\varepsilon_{(\alpha,g)}(z)$ $(z\in M)$ is a
positive constant on $M$.
\end{Definition}

The definition of balanced metrics was originally given by Donaldson
\cite{Donaldson} in the case of a compact polarized K\"{a}hler
manifold $(M,g)$ in 2001, who also established the existence of such
metrics on any (compact) projective K\"{a}hler manifold with
constant scalar curvature. The definition of balanced metrics was
generalized in Arezzo-Loi
 \cite{Are-Loi} and Engli\v{s} \cite{E3} to the noncompact case. We give only
the definition for those K\"{a}hler metrics which admit globally
defined potentials in this paper.

Balanced metrics play a fundamental role in the quantization  of a
K\"{a}hler manifold (e.g., see Berezin \cite{Berezin},
Cahen-Gutt-Rawnsley \cite{CGR} and Engli\v{s} \cite{E0}). It also related to the Bergman kernel expansion (e.g.,
see Loi \cite{Lo} and references therein).
For the study of the balanced metrics,  see also Feng-Tu \cite{FT, FT2}, Loi \cite{Loi2}, Loi-Mossa \cite{Loi-Mossa}, Loi-Zedda \cite{Loi-Zed, LZ},
Loi-Zedda-Zuddas \cite{Loi-Zed-Zud} and Zedda \cite{Zed}.

\vskip 5pt
 The Cartan-Hartogs domains and the Fock-Bargmann-Hartogs domains are two kinds of typical Hartogs domains (e.g., see Kim-Yamamori \cite{KY}).
 The Cartan-Hartogs domains are some Hartogs domains over bounded symmetric domains and there are many researches about the balanced metrics on the  Cartan-Hartogs domains
 (e.g., see Feng-Tu \cite{FT, FT2},  Loi-Zedda \cite{LZ} and Zedda \cite{Zed}).
The Fock-Bargmann-Hartogs domains are some Hartogs domains over $\mathbb{C}^{n}$, and, however, very little seems to be known about the existence of balanced metrics on
the Fock-Bargmann-Hartogs domains. In this paper we will obtain the existence of balanced metrics on a class of the Fock-Bargmann-Hartogs domains.
Therefore, together with Feng-Tu \cite{FT2},  the result of the present paper  represents an  example of balanced metric  on a complex domain which is not homogeneous.

\vskip 5pt
For a given positive real number $\mu$, the Fock-Bargmann-Hartogs domain $D_{n,m}(\mu)$ is a Hartogs domain over $\mathbb{C}^{n}$ defined by
\begin{equation}
D_{n,m}(\mu):=\big\{(z,w)\in \mathbb{C}^{n+m}: \Vert w\Vert^{2}<e^{-\mu{\Vert z\Vert}^{2}}\big\},
\end{equation}
where $\Vert\cdot\Vert$ is the standard Hermitian norm. The
Fock-Bargmann-Hartogs domains $D_{n,m}(\mu)$ are strongly
pseudoconvex, nonhomogeneous unbounded domains in $\mathbb{C}^{n+m}$ with smooth real-analytic boundary. We note that each
$D_{n,m}(\mu)$ contains $\{(z, 0)\in
\mathbb{C}^n\times\mathbb{C}^m\} \cong \mathbb{C}^n$. Thus each
$D_{n,m}(\mu)$ is not hyperbolic in the sense of Kobayashi and
$D_{n,m}(\mu)$ can not be biholomorphic to any bounded domain in
$\mathbb{C}^{n+m}$. Therefore, each Fock-Bargmann-Hartogs domain
$D_{n,m}(\mu)$ is an unbounded non-hyperbolic domain in
$\mathbb{C}^{n+m}.$  For the general reference of the Fock-Bargmann-Hartogs domain, see Kim-Ninh-Yamamori \cite{Kim}, Tu-Wang \cite {TW} and references therein.

Now we introduce a new K\"{a}hler metric $g(\mu;\nu)$ on $D_{n,m}(\mu)$. Let $\nu>-1$, and define the strictly plurisubharmonic function $\Phi(z,w)$ on the Fock-Bargmann-Hartogs domain $D_{n,m}(\mu)$ as follows
\begin{equation}\label{eq1.3}
\Phi(z,w):=\nu\mu{\Vert z\Vert}^{2}
-\ln(e^{-\mu{\Vert z\Vert}^{2}}-\Vert w\Vert^2).
\end{equation}
The K\"{a}hler form $\omega(\mu;\nu)$ on $D_{n,m}(\mu)$ is given by
\begin{equation*}
\omega(\mu;\nu):=\frac{\sqrt{-1}}{2\pi}\partial\overline{\partial}\Phi.
\end{equation*}
Hence the K\"{a}hler metric $g(\mu;\nu)$ on $D_{n,m}(\mu)$ associated with $\omega(\mu;\nu)$ can be expressed by
\begin{equation*}
g(\mu;\nu)_{i\overline{j}}=\frac{\partial^2\Phi}{\partial Z_{i}\partial\overline{Z_{j}}}\;\; (1\leq i,j\leq n+m),
\end{equation*}
where $(Z_{1},\cdots,Z_{n+m})=(z,w)$.

In the case of $n=1$ and $\nu=0$, the K\"{a}hler metric $g(\mu;\nu)$ reduces to the canonical metric in Loi-Zedda \cite{Loi-Zed} with $F(x)=\exp\{-\mu x\}$.
But, we will see that, in the case of  the Fock-Bargmann-Hartogs domain $D_{n,m}(\mu)$,  any metric $\alpha g(\mu;0)\;(\alpha>0)$ is not balanced by Th. 1.1 of the present paper.

The main result of the paper is the following.

\begin{Theorem}\label{Thm1.1}
Let the Fock-Bargmann-Hartogs domain $D_{n,m}(\mu)$ be endowed with the K\"{a}hler metric $g(\mu;\nu)$. Then the metric $\alpha g(\mu;\nu)$ on $D_{n,m}(\mu)$ is balanced if and only
if $\alpha>m+n,$  $n=1$ and $\nu=-\frac{1}{m+1}.$
\end{Theorem}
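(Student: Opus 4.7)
The plan is to compute the Bergman kernel $K_{\alpha}$ of $\mathcal{H}_{\alpha}$ explicitly via a monomial orthonormal basis, then form the Rawnsley $\varepsilon$-function and determine when it is constant.

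First, by the $U(n)\times U(m)$-invariance of the weight $e^{-\alpha\Phi}\det g(\mu;\nu)$, the monomials $z^{p}w^{q}$ are pairwise orthogonal in $\mathcal{H}_{\alpha}$. A direct Hessian computation for the Hartogs potential $\Phi(X,Y)=\nu\mu X - \ln(e^{-\mu X}-Y)$ (with $X:=\|z\|^{2}$, $Y:=\|w\|^{2}$) yields
$$\det g(\mu;\nu) \;=\; \frac{\mu^{n}\, e^{-\mu X}\bigl[(\nu+1)e^{-\mu X}-\nu Y\bigr]^{n}}{(e^{-\mu X}-Y)^{n+m+1}}.$$
Passing to polar coordinates in $z$ and $w$ and substituting $Y=e^{-\mu X}t$ with $t\in(0,1)$ decouples $\|z^{p}w^{q}\|^{2}$ into a gamma integral in $X$ (requiring $\alpha(\nu+1)+|q|>0$, automatic since $\nu>-1$) and, after binomially expanding $[(\nu+1)-\nu t]^{n}$, a finite sum of beta integrals in $t$ (convergent iff $\alpha>n+m$, which gives the first necessary condition).

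Assembling these norms in the Bergman series and collecting the exponential factors produces
$$\varepsilon_{(\alpha,g(\mu;\nu))}(z,w) \;=\; \frac{(1-u)^{\alpha}}{\pi^{n+m}}\sum_{t=0}^{\infty}\frac{u^{t}\bigl[\alpha(\nu+1)+t\bigr]^{n}}{t!\,\Sigma_{t}}, \qquad u:=Ye^{\mu X}\in[0,1),$$
where $\Sigma_{t}:=\sum_{k=0}^{n}\binom{n}{k}\nu^{k}\Gamma(\alpha-n-m+k)/\Gamma(\alpha-n+k+t)$; in particular $\varepsilon$ depends only on $u$. Matching against $(1-u)^{-\alpha}=\sum_{t}(\alpha)_{t}u^{t}/t!$ and recognising $\Sigma_{t}$ as a ratio of Gamma functions times a terminating Gauss series reduces the balance condition to the polynomial identity (in $s:=\alpha-n+t$)
$$(s+\alpha\nu+n)^{n} \;=\; (s)_{n}\,{}_{2}F_{1}\!\bigl(-n,\,\alpha-n-m;\,s;\,-\nu\bigr).$$
For $n=1$ and $\nu=-1/(m+1)$ both sides collapse to the common linear polynomial $s-(\alpha-m-1)/(m+1)$, yielding the ``if'' direction for every $\alpha>m+1$.

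The main obstacle is the ``only if'' direction, i.e., ruling out $n\ge 2$. Comparing coefficients of $s^{n-1}$ in the identity (only the $k=0$ and $k=1$ terms of the hypergeometric polynomial contribute at that degree) kills $\alpha\nu$ and forces
$$\nu=-\frac{n+1}{2(n+m)}.$$
Evaluating the identity at $s=-(n-1)$, where every term with $k<n$ of $_{2}F_{1}(-n,\alpha-n-m;s;-\nu)$ vanishes, then yields $(\alpha\nu+1)^{n}=\nu^{n}(\alpha-n-m)_{n}$, equivalently
$$\Bigl[\alpha-\frac{2(n+m)}{n+1}\Bigr]^{n} \;=\; \prod_{j=0}^{n-1}(\alpha-n-m+j).$$
For $n\ge 2$ and $\alpha>n+m$ every factor on the right is positive, with arithmetic mean $\alpha-(n+m)+(n-1)/2$, while the base on the left exceeds this mean by exactly $(n-1)(n+2m-1)/[2(n+1)]>0$. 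Hence by the AM--GM inequality the left side strictly exceeds the right, so no admissible $\alpha$ can exist when $n\ge 2$. This forces $n=1$, and then the coefficient-of-$s^{n-1}$ identity reads $\nu=-1/(m+1)$, completing the characterisation.
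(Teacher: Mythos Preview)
Your proof is correct and follows the same overall architecture as the paper: compute the monomial norms, assemble the weighted Bergman kernel, observe that $\varepsilon_{(\alpha,g(\mu;\nu))}$ depends only on $u=\|\widetilde w\|^2$, and reduce the balanced condition to a polynomial identity. Your determinant formula, your $\Sigma_t$, and your identity $(s+\alpha\nu+n)^n=(s)_n\,{}_2F_1(-n,\alpha-n-m;s;-\nu)$ are exactly the paper's Lemma~2.3, equation~(2.11), and equation~(3.4) in different notation (via the reindexing $d=n-k$, $s=\alpha-n+|q|$).

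The genuine difference lies in the ``only if'' step. The paper substitutes $x+y=1$ into (3.4) to obtain $(\nu x+1)^n=\nu^n(x-m-n)_n$ and then argues that, since the right-hand side has no repeated linear factor, one must have $n=1$. This reasoning treats (3.5) as a polynomial identity in $x$, whereas a priori the balanced hypothesis only fixes $x=\alpha$; the passage from ``identity in $y$ at $x=\alpha$'' to ``identity in both $x$ and $y$'' is not made explicit. Your argument avoids this issue entirely: matching the $s^{n-1}$ coefficients forces $\nu=-\tfrac{n+1}{2(n+m)}$ independently of $\alpha$, and then evaluating the polynomial $(s)_n\,{}_2F_1(\cdots)$ at $s=-(n-1)$ gives the single numerical constraint $(\alpha\nu+1)^n=\nu^n(\alpha-n-m)_n$, which your AM--GM comparison rules out for every admissible $\alpha$ when $n\ge 2$. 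Thus your route is a bit longer but stays rigorous for a \emph{fixed} $\alpha$, which is what the theorem actually asserts.

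Two minor points of presentation: your phrase ``every term with $k<n$ of ${}_2F_1(\ldots)$ vanishes'' should refer to the polynomial $(s)_n\,{}_2F_1(\ldots)=\sum_k\binom{n}{k}(\alpha-n-m)_k\nu^k(s+k)_{n-k}$, since ${}_2F_1$ itself has a pole at $s=-(n-1)$; and the necessity of $\alpha>n+m$ deserves one more sentence explaining why divergence of \emph{every} monomial norm forces $\mathcal H_\alpha=\{0\}$ (the paper does this via a Taylor expansion argument).
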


The paper is organized as follows. In Section 2, we give an explicit formula for the Bergman kernel of the weighted Hilbert space  of square integrable
holomorphic functions on $(D_{n,m}(\mu), g(\mu;\nu))$ with the weight $\exp\{-\alpha \Phi\}$ for $\alpha>0$, and
thus obtain the explicit expression of the Rawnsley's $\varepsilon$-function of $D_{n,m}(\mu)$ with respect to the metric $g(\mu;\nu)$.
In Section 3, using the expression of the Rawnsley's $\varepsilon$-function, we prove Theorem \ref{Thm1.1}.

\setcounter{equation}{0}
\section{The Rawnsley's $\varepsilon$-function for $D_{n,m}(\mu)$  with the metric $g(\mu;\nu)$}

We firstly give the explicit description of automorphism of the Fock-Bargmann-Hartogs domain $D_{n,m}(\mu)$ as follows.

\begin{Lemma}[Kim-Ninh-Yamamori \cite{Kim}]
The automorphism group $\mathrm{Aut}(D_{n,m}(\mu))$ is generated by all the following automorphisms of $D_{n,m}(\mu)$:\\
\begin{equation}
\varphi_{U}:\;(z,w)\rightarrow (Uz,w),\;U\in\mathcal{U}(n);
\end{equation}
\begin{equation}
\varphi_{V}:\;(z,w)\rightarrow(z,Vw),\;V\in \mathcal{U}(m);
\end{equation}
\begin{equation}\label{eq2.3}
\varphi_{a}:\;(z,w)\rightarrow (z-a,e^{\mu\langle z,a\rangle-\frac{\mu}{2}{\Vert a\Vert}^{2}}w),\;a\in\mathbb{C}^{n},
\end{equation}
where $\mathcal{U}(n)$ denotes the set of the $n\times n$ unitary matrices.
\end{Lemma}

\begin{Lemma}
Let $F(z,w):=(z-a,e^{\mu\langle z,a\rangle-\frac{\mu}{2}{\Vert a\Vert}^{2}}w)\;(a\in\mathbb{C}^{n})$ (i.e., an automorphism of $D_{n,m}(\mu)$ of the form \eqref{eq2.3}). Then we have
\begin{equation}\label{eq2.4}
\partial\overline{\partial}\Phi(F)=\partial\overline{\partial}\Phi,
\end{equation}
and moreover
\begin{equation}\label{eq2.5}
\det\big(\frac{\partial F_{j}}{\partial Z_{i}}\big)(z_{0},w_{0})=e^{m\mu\langle z_{0},a\rangle-\frac{m\mu}{2}{\Vert a\Vert}^{2}},
\end{equation}
where $\Phi$ is defined by \eqref{eq1.3} and $(Z_{1},\cdots,Z_{m+n})=(z,w)\in D_{n,m}(\mu)$.
\end{Lemma}

\begin{proof}[Proof]
By the definition of $F$, we have
\begin{equation*}
\begin{aligned}
&\Phi(F)\\
&=\mu\nu\Vert z-a\Vert^{2}-\ln e^{2\mu\mathrm{Re}\langle z,a\rangle-\mu\Vert a\Vert^{2}}(e^{-\mu{\Vert z\Vert}^{2}}-\Vert w\Vert^2)\\
&=\mu\nu\Vert z\Vert^{2}-\mu(\nu+1)(2\mathrm{Re}\langle z,a\rangle-\Vert a\Vert^{2})-\ln(e^{-\mu{\Vert z\Vert}^{2}}-\Vert w\Vert^2).\\
\end{aligned}
\end{equation*}
It follows that
$$\partial\overline{\partial}\Phi(F)=\partial\overline{\partial}\Phi.$$

On the other hand, it is easy to see
\begin{equation*}
\big(\frac{\partial F_{j}}{\partial Z_{i}}\big)(z_{0},w_{0})
=\left(\begin{array}{cc}
I_{n\times n}&0\\
\ast&e^{\mu\langle z_{0},a\rangle-\frac{\mu}{2}{\Vert a\Vert}^{2}}I_{m\times m}\\
\end{array}
\right),
\end{equation*}
where $I_{n\times n}$ and $I_{m\times m}$ denote the $n\times n$ and $m\times m$ diagonal matrices with the diagonal elements $1$, respectively.
 It implies
$$\det\big(\frac{\partial F_{j}}{\partial Z_{i}}\big)(z_{0},w_{0})=e^{m\mu\langle z_{0},a\rangle-\frac{m\mu}{2}{\Vert a\Vert}^{2}}.$$
The proof is finished.
\end{proof}

\begin{Lemma}\label{Le2.3}
Let $\Phi$ be defined by \eqref{eq1.3}. Then we have
\begin{equation}\label{eq2.6}
\det\bigg(\frac{\partial^{2}\Phi}{\partial Z_{i}\partial\overline{Z_{j}}}\bigg)(z,w)=\frac{\mu^{n}\big[\nu+
(1-{\Vert \widetilde{w}\Vert}^{2})^{-1}\big]^{n}}{(1-{\Vert \widetilde{w}\Vert}^{2})^{m+1}}e^{m\mu\Vert z\Vert^2},
\end{equation}
where $(Z_{1},\cdots,Z_{m+n})=(z,w)\in D_{n,m}(\mu)$ and $\widetilde{w}$ is defined by
\begin{equation}\label{eq2.7}
\widetilde{w}:=e^{\frac{\mu}{2}\Vert z\Vert^2}w.
\end{equation}
\end{Lemma}

\begin{proof}[Proof] For any $(z_{0},w_{0})\in D_{n,m}(\mu)$, consider the automorphism $F$ of $D_{n,m}(\mu)$ as follows:
$$F(z,w):=(z-z_{0},e^{\mu\langle z,z_{0}\rangle-\frac{\mu}{2}{\Vert z_{0}\Vert}^{2}}w).$$
Thus, $F(z_{0},w_{0})=(0,\widetilde{w_{0}})$, where $\widetilde{w_{0}}=e^{\frac{\mu}{2}\Vert z_{0}\Vert^2}w_{0}$. Due to \eqref{eq2.4}, we get
\begin{equation}\label{eq2.8}
\det\bigg(\frac{\partial^{2}\Phi(F)}{\partial Z_{i}\partial\overline{Z_{j}}}\bigg)(z_{0},w_{0})=\det\bigg(\frac{\partial^{2}\Phi}{\partial Z_{i}\partial\overline{Z_{j}}}\bigg)(z_{0},w_{0}).
\end{equation}
Since
\begin{equation}
\bigg(\frac{\partial^{2}\Phi(F)}{\partial Z_{i}\partial\overline{Z_{j}}}\bigg)(z_{0},w_{0})=\big(\frac{\partial F_{j}}{\partial Z_{i}}\big)
\bigg(\frac{\partial^{2}\Phi}{\partial Z_{i}\partial\overline{Z_{j}}}\bigg)(F(z_{0},w_{0}))\big(\frac{\partial \overline{F_{i}}}{\partial \overline{Z_{j}}}\big),
\end{equation}
by combining with \eqref{eq2.8}, we have
\begin{equation}\label{eq2.10}
\det\bigg(\frac{\partial^{2}\Phi}{\partial Z_{i}\partial\overline{Z_{j}}}\bigg)(z_{0},w_{0})=\bigg|\det\big(\frac{\partial F_{j}}{\partial Z_{i}}\big)(z_{0},w_{0})\bigg|^{2}\det\bigg(\frac{\partial^{2}\Phi}{\partial
Z_{i}\partial\overline{Z_{j}}}\bigg)(F(z_{0},w_{0})).
\end{equation}

Note the formula \eqref{eq2.5} implies
\begin{equation}\label{eq2.11}
\bigg|\det\big(\frac{\partial F_{j}}{\partial Z_{i}}\big)(z_{0},w_{0})\bigg|^{2}=e^{m\mu\Vert z_{0}\Vert^2}.
\end{equation}

By \eqref{eq1.3}, we have
\begin{equation*}
\begin{aligned}
&\frac{\partial^{2}\Phi}{\partial z_{i}\partial\overline{z_{j}}}(z,w)&=&\mu\nu\delta_{ij}+\frac{1}{(1-\Vert \widetilde{w}\Vert^{2})^{2}}\big[\mu\delta_{ij}(1-\Vert \widetilde{w}\Vert^{2})+\mu^{2}\overline{z_{i}}z_{j}
\big]\Vert\widetilde{w}\Vert^{2},\\
&\frac{\partial^{2}\Phi}{\partial z_{i}\partial\overline{w_{j}}}(z,w)&=&\frac{\mu e^{{\frac{\mu}{2}}\Vert z \Vert^{2}}\overline{z_{i}}\widetilde{w}_{j}}{(1-\Vert \widetilde{w}\Vert^{2})^{2}},\\
&\frac{\partial^{2}\Phi}{\partial w_{i}\partial\overline{w_{j}}}(z,w)&=&\frac{e^{\mu\Vert z \Vert^{2}}}{(1-\Vert \widetilde{w}\Vert^{2})^{2}}\big[\delta_{ij}(1-\Vert
\widetilde{w}\Vert^{2})+\overline{\widetilde{w_{i}}}\widetilde{w_{j}}\big].\\
\end{aligned}
\end{equation*}
In particular, when we evaluate at $(0,\widetilde{w_{0}})$, we obtain
\begin{equation*}
\bigg(\frac{\partial^{2}\Phi}{\partial Z_{i}\partial\overline{Z_{j}}}\bigg)(0,\widetilde{w_{0}})=
\left(\begin{array}{cc}
\mu\big[\nu+(1-\Vert\widetilde{w_{0}} \Vert^2)^{-1}\big]I_{n\times n}&0\\
0&\frac{1}{1-\Vert \widetilde{w_{0}}\Vert^2}I_{m\times m}+\frac{1}{(1-\Vert \widetilde{w_{0}}\Vert^{2})^{2}}\overline{\widetilde{w_{0}}}^{t}\widetilde{w_{0}}\\
\end{array}
\right),
\end{equation*}
which implies
\begin{equation}\label{eq2.12}
\det\bigg(\frac{\partial^{2}\Phi}{\partial Z_{i}\partial\overline{Z_{j}}}\bigg)(0,\widetilde{w_{0}})=\frac{\mu^{n}\big[\nu+(1-\Vert\widetilde{w_{0}} \Vert^2)^{-1}\big]^{n}}{(1-\Vert \widetilde{w_{0}}\Vert^2)^{m+1}}.
\end{equation}
Therefore, combining  \eqref{eq2.10}, \eqref{eq2.11} and \eqref{eq2.12}, we get \eqref{eq2.6}. The proof is completed. \end{proof}

\begin{Lemma}\label{Le2.4}
For $\alpha>m+k-1$, the following multiple integration exists and
\begin{equation*}
\int_{0}^1dx_{m}\cdots\int_{0}^{1-\sum\limits_{i=2}^{m}x_{i}}\bigg(1-\sum\limits_{i=1}^{m}x_{i}\bigg)^{\alpha-m-k}\prod\limits_{i=1}^{m}
x_{i}^{q_{i}}dx_{1}
=\frac{\prod\limits_{i=1}^{m}\Gamma(q_{i}+1)\Gamma(\alpha-m-k+1)}{\Gamma(\alpha+\sum_{i=1}^{m}q_{i}-k+1)},
\end{equation*}
where $q=(q_{1},\cdots,q_{m})\in(\mathbb{R}_{+})^{m}$, here $\mathbb{R}_{+}$ denotes the set of positive real numbers.
\end{Lemma}
\begin{proof}[Proof]
The proof is trivial, we omit it.
\end{proof}

\begin{Lemma}\label{Le2.5}
For any $p\in \mathbb{N}^{n}$, $q\in\mathbb{N}^{m}$ and $\alpha>m+n$, we have
\begin{equation}\label{eq2.13}
{\Vert z^{p}w^{q}\Vert}^2=\frac{\prod\limits_{i=1}^{n}\Gamma(p_{i}+1)\prod\limits_{i=1}^{m}\Gamma(q_{i}+1)}{[\mu((\nu+1)\alpha+\vert q\vert)]^{\vert p\vert}\chi(\alpha,\vert q\vert)},
\end{equation}
where $w^q$, $|q|$, $\chi(\alpha,\vert q\vert)$ and ${\Vert z^{p}w^{q}\Vert}^2$ are given by
\begin{equation*}
w^q:=\prod_{j=1}^mw_j^{q_j},\;\;|q|=\sum_{j=1}^mq_j,
\end{equation*}
\begin{equation}\label{eq11}
\chi(\alpha,\vert q\vert):=\frac{[(\nu+1)\alpha+\vert q\vert]^{n}}{\sum\limits_{d=0}^{n}{n\choose d}\nu^{n-d}\frac{\Gamma(\alpha-m-d)}{\Gamma(\alpha-d+\vert q\vert)}}
\end{equation}
and
\begin{equation}\label{eq2.14}
{\Vert z^{p}w^{q}\Vert}^2:=\int_{D_{n,m}(\mu)}{\vert z^{p}w^{q}\vert}^{2}\exp\{-\alpha\Phi\}\frac{\omega(\mu;\nu)^{m+n}}{(n+m)!}
\end{equation}
for $ w:=(w_1,\ldots,w_m),\;q:=(q_1,\ldots,q_m).$
\end{Lemma}

\begin{proof}[Proof]
Firstly, it is well known that
$$\frac{\big(\frac{\sqrt{-1}}{2\pi}\partial\overline{\partial}\Phi\big)^{n+m}}{(n+m)!}=\det\bigg(\frac{\partial^{2}\Phi}{\partial Z_{i}\partial\overline{Z_{j}}}\bigg)\frac{\omega_{0}^{n+m}}{(n+m)!},$$
where $\omega_{0}=\frac{\sqrt{-1}}{2\pi}\sum_{i=1}^{n+m}dZ_{i}\wedge d\overline{Z_{i}}$. Therefore, by Lemma \ref{Le2.3}, we obtain
\begin{equation*}
{\Vert z^{p}w^{q}\Vert}^2=\frac{\mu^n}{\pi^{n+m}}\int_{D_{n,m}(\mu)}\vert z\vert^{2p}\vert w\vert^{2q}e^{-\mu((\nu+1)\alpha-m){\Vert z\Vert}^2}(1-{\Vert \widetilde{w}\Vert}^{2})^{\alpha-m-1}\big[\nu+
(1-{\Vert \widetilde{w}\Vert}^{2})^{-1}\big]^{n}dm(z)dm(w),
\end{equation*}
where $dm$ is the Euclidean measure. Thus, by using the polar coordinates (namely, $z_{j}=r_{j}e^{i\theta_{j}}$, $w_{l}=k_{l}e^{i\theta_{l}}$), it follows
\begin{equation*}
{\Vert z^{p}w^{q}\Vert}^2={2^{n+m}\mu^n}\int_{\Vert k\Vert^{2}<e^{-\mu\Vert r\Vert^{2}}\atop r\geq 0,k\geq 0}r^{2p+1}k^{2q+1}e^{-\mu((\nu+1)\alpha-m)\Vert r\Vert^{2}}(1-\Vert \widetilde{k}\Vert^{2})^{\alpha-m-1}\big[\nu+
(1-{\Vert \widetilde{k}\Vert}^{2})^{-1}\big]^{n}drdk,
\end{equation*}
where $\widetilde{k}=e^{\frac{\mu}{2}\Vert r\Vert^{2}}k$, $r=(r_{1},\cdots,r_{n})$ and $k=(k_{1},\cdots,k_{m})$. Therefore, by setting $s_{i}=r_{i}^2$ ($1\leq i\leq n$) and $t_{j}=k_{j}^{2}$ ($1\leq j\leq m$), we have
\begin{equation*}
{\Vert z^{p}w^{q}\Vert}^2=\mu^{n}\int_{\sum\limits_{i=1}^mt_{i}<e^{-\mu\sum\limits_{i=1}^ns_{i}}\atop t_i\geq 0,s_i\geq 0}s^pt^qe^{-\mu((\nu+1)\alpha-m)\sum\limits_{i=1}^ns_{i}}(1- \sum\limits_{i=1}^m\widetilde{t_{i}})^{\alpha-m-1}\big[\nu+
(1-\sum\limits_{i=1}^m\widetilde{t_{i}})^{-1}\big]^{n}dsdt,
\end{equation*}
where $\widetilde{t_{i}}=e^{\mu\sum\limits_{i=1}^ns_{i}}t_{i}$, and so it follows
\begin{equation*}
{\Vert z^{p}w^{q}\Vert}^2=\mu^{n}\int_{(\mathbb{R}_{+})^n}s^{p}e^{-\mu((\nu+1)\alpha+\vert q\vert)\sum\limits_{i=1}^ns_{i}}ds\int_{\sum\limits_{i=1}^m\widetilde{t_{i}}<1\atop \widetilde{t_{i}}\geq 0}\sum\limits_{d=0}^{n}{n\choose d}\nu^{n-d}(1-
\sum\limits_{i=1}^m\widetilde{t_{i}})^{\alpha-m-1-d}\widetilde{t}^qd\widetilde{t}.
\end{equation*}

Since $\alpha>m+n$, by Lemma \ref{Le2.4}, we have
\begin{equation*}
{\Vert z^{p}w^{q}\Vert}^2=\mu^n\sum\limits_{d=0}^{n}{n\choose d}\nu^{n-d}\frac{\prod\limits_{i=1}^{m}\Gamma(q_{i}+1)\Gamma(\alpha-m-d)}{\Gamma(\alpha-d+\vert q\vert)}\int_{(\mathbb{R}_{+})^n}s^pe^{-\mu((\nu+1)\alpha+\vert
q\vert)\sum\limits_{i=1}^ns_{i}}ds.
\end{equation*}
By the definition of Gamma functions, we obtain
\begin{equation*}
{\Vert z^{p}w^{q}\Vert}^2=\mu^n\sum\limits_{d=0}^{n}{n\choose d}\nu^{n-d}\frac{\prod\limits_{i=1}^{n}\Gamma(p_{i}+1)\prod\limits_{i=1}^{m}\Gamma(q_{i}+1)\Gamma(\alpha-m-d)}{\Gamma(\alpha-d+\vert q\vert)\big[\mu\big((\nu+1)\alpha+\vert
q\vert\big)\big]^{\vert p\vert+n}}.
\end{equation*}
The proof is completed. \end{proof}

\begin{Theorem}
Let $(D_{n,m}(\mu),g(\mu;\nu))$ be the Fock-Bargmann-Hartogs domain $D_{n,m}(\mu)$ endowed with the metric $g(\mu;\nu)$. Then, for $\alpha>m+n$, the Bergman kernel of the weight Hilbert space $\mathcal{H}_{\alpha}$  defined by
\begin{equation*}
\mathcal{H}_{\alpha}:=\bigg\{f\in \mathrm{Hol}(D_{n,m}(\mu)):\int_{D_{n,m}(\mu)}\vert f\vert^{2}\exp\{-\alpha \Phi\}\frac{\omega(\mu;\nu)^{m+n}}{(n+m)!}<\infty\bigg\}
\end{equation*}
can be expressed as follows
\begin{equation}\label{eq2.15}
K_{\alpha}(z,w,\overline{z},\overline{w})=(\alpha-m-n)_{m+n}e^{\mu(\nu+1)\alpha\Vert z\Vert^2}\sum\limits_{q\in\mathbb{N}^m}\psi(\alpha,\vert q\vert)\frac{\Gamma(\vert q
\vert+\alpha)}{\Gamma(\alpha)\prod\limits_{i=1}^{m}\Gamma(q_{i}+1)}\widetilde{w}^q\overline{\widetilde{w}^q},
\end{equation}
where $(\alpha)_k$ and $\psi(\alpha,\vert q\vert)$ are defined by
\begin{equation*}
  (\alpha)_k:=\frac{\Gamma(\alpha+k)}{\Gamma(\alpha)}=\alpha(\alpha+1)\cdots(\alpha+k-1)
\end{equation*}
and
\begin{equation}\label{eq2.16}
\psi(\alpha,\vert q\vert):=\frac{\Gamma(\alpha-m-n)\chi(\alpha,\vert q\vert)}{\Gamma(\alpha+\vert q\vert)}.
\end{equation}
\end{Theorem}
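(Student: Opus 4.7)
My plan is to diagonalize $\mathcal{H}_\alpha$ in the monomial basis. Because $\Phi$ depends only on $\|z\|^2$ and $\|w\|^2$ and the Monge--Amp\`ere determinant in Lemma \ref{Le2.3} depends only on $\|z\|^2$ and $\|\widetilde{w}\|^2$, the weighted volume form $\exp\{-\alpha\Phi\}\omega(\mu;\nu)^{n+m}/(n+m)!$ is invariant under the standard torus action $(z,w)\mapsto(e^{i\theta_{1}}z_{1},\ldots,e^{i\varphi_{m}}w_{m})$. Since $D_{n,m}(\mu)$ is a complete Reinhardt domain, this invariance forces the monomials $\{z^{p}w^{q}:p\in\mathbb{N}^{n},\,q\in\mathbb{N}^{m}\}$ to be pairwise orthogonal in $\mathcal{H}_\alpha$; Lemma \ref{Le2.5} guarantees each has finite norm when $\alpha>m+n$, and the usual Laurent-expansion argument on Reinhardt domains furnishes density, so they form an orthogonal basis.

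\textbf{Summing the reproducing series.} With this basis in hand, the reproducing kernel on the diagonal is
\begin{equation*}
K_\alpha(z,w,\overline{z},\overline{w})=\sum_{p,q}\frac{|z|^{2p}|w|^{2q}}{\|z^{p}w^{q}\|^{2}}=\sum_{p,q}\frac{[\mu((\nu+1)\alpha+|q|)]^{|p|}\,\chi(\alpha,|q|)}{\prod_{i}\Gamma(p_{i}+1)\prod_{j}\Gamma(q_{j}+1)}\,|z|^{2p}|w|^{2q},
\end{equation*}
by Lemma \ref{Le2.5}. Fixing $q$ and summing over $p\in\mathbb{N}^{n}$ first, the $p$-sum factors as an $n$-fold product of exponential Taylor series and collapses to $\exp\{\mu((\nu+1)\alpha+|q|)\|z\|^{2}\}$. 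Pulling out the $q$-independent factor $e^{\mu(\nu+1)\alpha\|z\|^{2}}$ and using the definition $\widetilde{w}=e^{(\mu/2)\|z\|^{2}}w$, which gives $e^{\mu|q|\|z\|^{2}}|w|^{2q}=\widetilde{w}^{q}\overline{\widetilde{w}^{q}}$, reduces the kernel to
\begin{equation*}
K_\alpha(z,w,\overline{z},\overline{w})=e^{\mu(\nu+1)\alpha\|z\|^{2}}\sum_{q\in\mathbb{N}^{m}}\chi(\alpha,|q|)\,\frac{\widetilde{w}^{q}\overline{\widetilde{w}^{q}}}{\prod_{j}\Gamma(q_{j}+1)}.
\end{equation*}

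\textbf{Matching the stated form and main obstacle.} To convert this into \eqref{eq2.15}, I use the elementary identity $(\alpha-m-n)_{m+n}=\Gamma(\alpha)/\Gamma(\alpha-m-n)$ together with the definition \eqref{eq2.16} of $\psi(\alpha,|q|)$, which combine to give
\begin{equation*}
\chi(\alpha,|q|)=(\alpha-m-n)_{m+n}\,\psi(\alpha,|q|)\,\frac{\Gamma(\alpha+|q|)}{\Gamma(\alpha)};
\end{equation*}
substituting this into the preceding display reproduces \eqref{eq2.15} verbatim. The only step that demands genuine care is absolute convergence and the interchange of summations in $p$ and $q$: on any compact $K\subset D_{n,m}(\mu)$ one has $\|\widetilde{w}\|^{2}\leq c_{K}<1$, so after the $p$-collapse the residual $q$-series is bounded by a $\sum_{q}|q|^{n}\|\widetilde{w}\|^{2|q|}$-type tail and converges uniformly on $K$. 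This legitimizes the term-by-term rearrangement and completes the argument.
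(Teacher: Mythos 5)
Your proposal is correct and follows essentially the same route as the paper: expand the kernel over the orthogonal monomial basis (justified by the Reinhardt structure), insert the norm formula of Lemma \ref{Le2.5}, collapse the $p$-sum into the exponential factor, and rewrite $\chi(\alpha,|q|)$ via $(\alpha-m-n)_{m+n}=\Gamma(\alpha)/\Gamma(\alpha-m-n)$ to reach \eqref{eq2.15}. Your added remarks on orthogonality via torus invariance and on the convergence justifying the rearrangement are sound and, if anything, more careful than the paper's own treatment.
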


\begin{proof}[Proof]
Since $\big\{\frac{z^pw^q}{\Vert z^pw^q\Vert}\big\}$ constitute an orthonormal basis of  $\mathcal{H}_{\alpha}$, we have
\begin{equation}
K_{\alpha}(z,w,\overline{z},\overline{w})=\sum_{p\in\mathbb{N}^m,q\in\mathbb{N}^m}\frac{z^pw^q\overline{z^pw^q}}{\Vert z^pw^q\Vert^2}
\end{equation}
by the Fock-Bargmann-Hartogs domain $D_{n,m}(\mu)$ being a Reinhardt domain. The formula \eqref{eq2.13} implies that
\begin{equation*}
\begin{aligned}
&K_{\alpha}(z,w,\overline{z},\overline{w})\\
=&\sum_{p\in\mathbb{N}^m,q\in\mathbb{N}^m}\frac{[\mu((\nu+1)\alpha+\vert q\vert)]^{\vert p\vert}\chi(\alpha,\vert q\vert)}{\prod\limits_{i=1}^{n}\Gamma(p_{i}+1)\prod\limits_{i=1}^{m}\Gamma(q_{i}+1)}z^pw^q\overline{z^pw^q}\\
=&\sum\limits_{q\in\mathbb{N}^m}e^{\mu((\nu+1)\alpha+\vert q\vert)\Vert z\Vert^2}\frac{\chi(\alpha,\vert q\vert)}{\prod\limits_{i=1}^{m}\Gamma(q_{i}+1)}w^q\overline{w^q}\\
=&e^{\mu(\nu+1)\alpha\Vert z\Vert^2}\sum\limits_{q\in\mathbb{N}^m}\frac{\chi(\alpha,\vert q\vert)}{\prod\limits_{i=1}^{m}\Gamma(q_{i}+1)}{ \widetilde{w}^q\overline{\widetilde{w}^q}},
\end{aligned}
\end{equation*}
where $\widetilde{w}$ is defined by \eqref{eq2.7}. By simplifying the above formula, we have
\begin{equation*}
\begin{aligned}
&K_{\alpha}(z,w,\overline{z},\overline{w})\\
=&(\alpha-m-n)_{m+n}e^{\mu(\nu+1)\alpha\Vert z\Vert^2}\sum\limits_{q\in\mathbb{N}^m}\frac{\Gamma(\alpha-m-n)\chi(\alpha,\vert q\vert)}{\Gamma(\alpha+\vert q\vert)}\frac{\Gamma(\alpha+\vert
q\vert)}{\Gamma(\alpha)\prod\limits_{i=1}^{m}\Gamma(q_{i}+1)}\widetilde{w}^q\overline{\widetilde{w}^q}\\
=&(\alpha-m-n)_{m+n}e^{\mu(\nu+1)\alpha\Vert z\Vert^2}\sum\limits_{q\in\mathbb{N}^m}\psi(\alpha,\vert q\vert)\frac{\Gamma(\vert q \vert+\alpha)}{\Gamma(\alpha)\prod\limits_{i=1}^{m}\Gamma(q_{i}+1)}\widetilde{w}^q\overline{\widetilde{w}^q}.
\end{aligned}
\end{equation*}
So we finish the proof.
\end{proof}

Now we give the explicit expression of the Rawnsley's $\varepsilon$-function of the Fock-Bargmann-Hartogs domain $(D_{n,m}(\mu),g(\mu;\nu))$ as follows.

\begin{Theorem}
Suppose $(D_{n,m}(\mu),g(\mu;\nu))$ is the Fock-Bargmann-Hartogs domain $D_{n,m}(\mu)$ endowed with the metric $g(\mu;\nu)$. Then the explicit expression of the Rawnsley's $\varepsilon$-function of $(D_{n,m}(\mu),g(\mu;\nu))$ is given by
\begin{equation}\label{eq2.18}
\varepsilon_{(\alpha,g(\mu;\nu))}=(\alpha-m-n)_{m+n}(1-\Vert \widetilde{w}\Vert^{2})^{\alpha}\sum\limits_{q\in\mathbb{N}^m}\psi(\alpha,\vert q\vert)\frac{\Gamma(\vert q
\vert+\alpha)}{\Gamma(\alpha)\prod\limits_{i=1}^{m}\Gamma(q_{i}+1)}\widetilde{w}^q\overline{\widetilde{w}^q}.
\end{equation}
\end{Theorem}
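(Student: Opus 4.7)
The plan is to apply the definition (\ref{eq1.1}) directly: $\varepsilon_{(\alpha,g(\mu;\nu))}(z,w) = \exp\{-\alpha\Phi(z,w)\}\, K_\alpha(z,w,\overline z,\overline w)$, using the closed form for $K_\alpha$ just proved in the preceding theorem. So no new analysis is needed; everything reduces to matching the prefactors in front of the series $\sum_q \psi(\alpha,|q|)\cdots \widetilde w^q\overline{\widetilde w^q}$.

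First I would rewrite the weight $\exp\{-\alpha\Phi\}$ in terms of the variable $\widetilde w = e^{\frac{\mu}{2}\Vert z\Vert^2}w$ introduced in (\ref{eq2.7}). From (\ref{eq1.3}),
\begin{equation*}
e^{-\mu\Vert z\Vert^2}-\Vert w\Vert^2 = e^{-\mu\Vert z\Vert^2}\bigl(1-\Vert\widetilde w\Vert^2\bigr),
\end{equation*}
hence
\begin{equation*}
\exp\{-\alpha\Phi(z,w)\} = \exp\{-\alpha\mu\nu\Vert z\Vert^2\}\bigl(e^{-\mu\Vert z\Vert^2}-\Vert w\Vert^2\bigr)^{\alpha} = \exp\{-\mu(\nu+1)\alpha\Vert z\Vert^2\}\bigl(1-\Vert\widetilde w\Vert^2\bigr)^{\alpha}.
\end{equation*}

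Next I would multiply this by the expression (\ref{eq2.15}) for $K_\alpha$. The factor $e^{\mu(\nu+1)\alpha\Vert z\Vert^2}$ appearing in $K_\alpha$ cancels exactly against $\exp\{-\mu(\nu+1)\alpha\Vert z\Vert^2\}$ from the weight, which leaves the single clean factor $(1-\Vert\widetilde w\Vert^2)^{\alpha}$ in front of the series. Everything else in (\ref{eq2.15}) is already written in terms of $\widetilde w$ and carries over verbatim, giving (\ref{eq2.18}).

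There is no real obstacle here: the only thing to be careful about is bookkeeping the exponential $\mu$-dependence so that the $\nu\mu\Vert z\Vert^2$ term from $\Phi$ and the logarithmic term combine to exactly the coefficient $\mu(\nu+1)\alpha$ that matches the exponent appearing in $K_\alpha$. Once that cancellation is verified the formula follows in one line. Note also that the independence of the $\varepsilon$-function from the choice of K\"ahler potential (as remarked after (\ref{eq1.1})) is consistent here: although $K_\alpha$ depends on $\Phi$, the product $e^{-\alpha\Phi}K_\alpha$ depends only on the metric $g(\mu;\nu)$ and on $\Vert\widetilde w\Vert^2$, which is the natural $\mathrm{Aut}(D_{n,m}(\mu))$-invariant quantity by Lemma~2.2.
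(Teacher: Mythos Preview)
Your proposal is correct and follows essentially the same route as the paper: both compute $e^{-\alpha\Phi}=e^{-\mu(\nu+1)\alpha\Vert z\Vert^{2}}(1-\Vert\widetilde w\Vert^{2})^{\alpha}$ from (\ref{eq1.3}) and then multiply by the expression (\ref{eq2.15}) for $K_\alpha$, observing that the exponential factors cancel. The paper's proof is slightly terser, but the content is identical.
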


\begin{proof}[Proof]
In fact, by the definition \eqref{eq1.1}, we have
\begin{equation*}
\varepsilon_{(\alpha,g(\mu;\nu))}=e^{-\alpha\Phi(z,w)}K_{\alpha}(z,w,\overline{z},\overline{w}),
\end{equation*}
and by the definition \eqref{eq1.3}, we get
 \begin{equation*}
 e^{-\alpha\Phi(z,w)}=e^{-\mu(\nu+1)\alpha\Vert z\Vert^2}(1-\Vert \widetilde{w}\Vert^{2})^{\alpha}.
 \end{equation*}
Therefore, by \eqref{eq2.15}, we obtain
\begin{equation}
\varepsilon_{(\alpha,g(\mu;\nu))}=(\alpha-m-n)_{m+n}(1-\Vert \widetilde{w}\Vert^{2})^{\alpha}\sum\limits_{q\in\mathbb{N}^m}\psi(\alpha,\vert q\vert)\frac{\Gamma(\vert q
\vert+\alpha)}{\Gamma(\alpha)\prod\limits_{i=1}^{m}\Gamma(q_{i}+1)}\widetilde{w}^q\overline{\widetilde{w}^q}.
\end{equation}
The proof is finished.
\end{proof}

\setcounter{equation}{0}
\section{The proof of main results}
We at first give the following lemma.

\begin{Lemma}[see D'Angelo \cite{D2} Lemma 2]\label{Le3.1}
Let $x=(x_{1},\cdots,x_{m})\in \mathbb{R}^{m}$  with $\Vert x\Vert<1$ and $s\in \mathbb{R}$ with $s>0$. Then
\begin{equation}
\sum\limits_{q\in\mathbb{N}^m}\frac{\Gamma(\vert q \vert+s)}{\Gamma(s)\prod\limits_{i=1}^{m}\Gamma(q_{i}+1)}x^{2q}=\frac{1}{(1-{\Vert x\Vert^2})^{s}}.
\end{equation}
\end{Lemma}

Now we give the proof of our main result.

\begin{proof}[The proof of Theorem \ref{Thm1.1}]
Firstly, we must ensure that the weighted Hilbert space $\mathcal{H}_{\alpha}$  will not reduce to zero subspace. In our case, it is easy to see
\begin{equation*}
\mathcal{H}_{\alpha}\neq\{0\}\Longleftrightarrow \alpha>m+n.
\end{equation*}

In fact, if $\alpha>m+n$, then by Lemma \ref{Le2.5}, $\mathcal{H}_{\alpha}\neq\{0\}$. Conversely, if $\mathcal{H}_{\alpha}\neq\{0\}$,  we assume $f(z,w)\in \mathcal{H}_{\alpha}\setminus \{0\}$. Since $D_{n,m}(\mu)$ is a complete Reinhardt domain,  $f(z,w)$ can be expressed by
$$f(z,w)=\sum_{\beta}f_{\beta}(z)w^{\beta},$$
where the series is uniformly convergent on any compact subset of $D_{n,m}(\mu)$ and every $f_{\beta}$ is holomorphic on $\mathbb{C}^{n}$. So we have
\begin{equation*}
\Vert f\Vert^{2}=\sum_{\beta}\frac{\mu^n}{\pi^{n+m}}\int_{D_{n,m}(\mu)}\vert f_{\beta}\vert^{2}\vert w\vert^{2\beta}e^{-\mu((\nu+1)\alpha-m){\Vert z\Vert}^2}(1-{\Vert \widetilde{w}\Vert}^{2})^{\alpha-m-1}\big[\nu+
(1-{\Vert \widetilde{w}\Vert}^{2})^{-1}\big]^{n}dV<+\infty.
\end{equation*}
Note $\Vert \widetilde{w}\Vert= \Vert  e^{\frac{\mu}{2}\Vert z\Vert^2}w\Vert <1$ on $D_{n,m}(\mu)$ by definition. Thus, for each $\beta$, we have
$$\frac{\mu^n}{\pi^{n+m}}\int_{D_{n,m}(\mu)}\vert f_{\beta}\vert^{2}\vert w\vert^{2\beta}e^{-\mu((\nu+1)\alpha-m){\Vert z\Vert}^2}(1-{\Vert \widetilde{w}\Vert}^{2})^{\alpha-m-1}\big[\nu+
(1-{\Vert \widetilde{w}\Vert}^{2})^{-1}\big]^{n}dV \leq \Vert f\Vert^{2} <+\infty.$$
Assume  $f_{\beta_0}\not\equiv 0$ on $\mathbb{C}^{n}$. Thus, by Fubini's Th., we have
\begin{equation*}
\sum\limits_{d=1}^{n}{n\choose d}\nu^{n-d}\int_{{\Vert \widetilde{w}\Vert}^{2}<1}\vert \widetilde{w}\vert^{2\beta_0}(1-{\Vert \widetilde{w}\Vert}^{2})^{\alpha-m-1-d}d\widetilde{w}<+\infty.
\end{equation*}
Hence we have $\alpha-m-1-d>-1$ for all $1\leq d\leq n$. Therefore $\alpha>m+n$.

Secondly, from \eqref{eq2.18}, we know that $\varepsilon_{(\alpha,g(\mu;\nu))}(z,w)$ is independent of $(z,w)$ if and only if there exists a constant $\lambda$ with respect to $(z,w)$ such that
\begin{equation}
(1-{\Vert \widetilde{w}\Vert^2})^{-\alpha}=\lambda\sum\limits_{q\in\mathbb{N}^m}\psi(\alpha,\vert q\vert)\frac{\Gamma(\vert q \vert+\alpha)}{\Gamma(\alpha)\prod\limits_{i=1}^{m}\Gamma(q_{i}+1)}\widetilde{w}^q\overline{\widetilde{w}^q}.
\end{equation}
Thus, by Lemma \ref{Le3.1}, we get that $\psi(\alpha,\vert q\vert)$ is a constant with respect to $|q|$. From \eqref{eq11} and \eqref{eq2.16}, we get
\begin{equation}\label{eq3.3}
\psi(\alpha,\vert q\vert)=\frac{[(\nu+1)\alpha+\vert q\vert]^{n}}{\sum\limits_{d=0}^{n}{n\choose d}\nu^{n-d}(\alpha-m-n)_{n-d}(\alpha-d+\vert q\vert)_{d}},
\end{equation}
and, obviously, $\psi(\alpha,\vert q\vert)$ tends to $1$ as $\vert q\vert\rightarrow\infty$. Thus $\varepsilon_{(\alpha,g(\mu;\nu))}(z,w)$ is a constant if and only if for any $x,y\in\mathbb{R}$,
\begin{equation}\label{eq3.4}
[(\nu+1)x+y]^{n}=\sum\limits_{d=0}^{n}{n\choose d}\nu^{n-d}(x-m-n)_{n-d}(x-d+y)_{d}.
\end{equation}

Now we claim that \eqref{eq3.4} holds if and only if
\begin{equation*}
n=1,\quad \nu=-\frac{1}{m+1}.
\end{equation*}

Indeed, if \eqref{eq3.4} holds, then by setting $x+y=1$ in \eqref{eq3.4}, we have
\begin{equation}\label{eq3.5}
(\nu x+1)^{n}=\nu^{n}(x-m-n)_{n}.
\end{equation}
If $\nu=0$, it is impossible that \eqref{eq3.5} holds. Thus we assume $\nu\neq0$. Then \eqref{eq3.5} implies
\begin{equation}
\left(x+\frac{1}{\nu}\right)^{n}=\prod\limits_{j=1}^{n}(x-m-j).
\end{equation}
Since the right side of the above formula has no multiple divisor, we have $n=1$, and so
$$-\frac{1}{\nu}=m+1\Rightarrow\nu=-\frac{1}{m+1}.$$

On the other hand, it is easy to see that \eqref{eq3.4} holds when $n=1$ and $\nu=-\frac{1}{m+1}$. The proof is finished.
\end{proof}

 \noindent\textbf{Acknowledgments}
 E. Bi was supported by the Fundamental Reasearch Funds for the Central Universities (No.2014201020204), Z. Feng was supported by the
Scientific Research Fund of Sichuan Provincial Education Department (No.15ZA0284), and Z. Tu was supported by the National Natural
Science Foundation of China (No.11271291).


\addcontentsline{toc}{section}{References}
\phantomsection
\renewcommand\refname{References}
\small{
}

\begin{thebibliography}{99}
\setlength{\parskip}{0pt}
\bibitem{Are-Loi} Arezzo, C., Loi, A.: Moment maps, scalar curvature and quantization of K¡§ahler manifolds. Comm. Math. Phys. \textbf{243}, 543-559 (2004)


\bibitem{Berezin}Berezin, F.A.: Quantization, Math. USSR Izvestiya. \textbf{8}, 1109-1163 (1974)

\bibitem{CGR}Cahen, M., Gutt, S., Rawnsley, J.: Quantization of K\"{a}hler manifolds. I: Geometric interpretation of Berezin's quantization.
J. Geom. Phys. \textbf{7}, 45-62 (1990)


\bibitem{Cat}Catlin, D.: The Bergman kernel and a theorem of Tian.
Analysis and geometry in several complex variables (Katata, 1997),
Trends Math., Birkh\"{a}user Boston, Boston, MA, pp. 1-23 (1999)


\bibitem{D2}D'Angelo, J.P.: An explicit computation of the Bergman kernel
function. J. Geom. Anal. \textbf{4}(1), 23-34 (1994)

\bibitem{Donaldson}Donaldson, S.: Scalar curvature and projective
embeddings, I.  J. Differential Geom. \textbf{59}, 479-522 (2001)

\bibitem{E0}Engli\v{s}, M.: Berezin Quantization and Reproducing Kernels on Complex Domains,
Trans. Amer. Math. Soc.  \textbf{348}, 411-479 (1996)

\bibitem{E1}Engli\v{s}, M.: A Forelli-Rudin construction and asymptotics of weighted Bergman kernels. J. Funct. Anal. \textbf{177}, 257-281 (2000)

\bibitem{E2}Engli\v{s}, M.: The asymptotics of a Laplace integral on a K\"{a}hler manifold. J. Reine Angew. Math. \textbf{528}, 1-39 (2000)

\bibitem{E3}Engli\v{s}, M.: Weighted Bergman kernels and balanced metrics.  RIMS Kokyuroku \textbf{1487}, 40-54(2006)







\bibitem{FT}Feng, Z.M., Tu, Z.H.: On canonical metrics on Cartan-Hartogs domains. Math. Z. \textbf{278}, 301-320 (2014)

\bibitem{FT2}Feng, Z.M., Tu, Z.H.: Balanced  metrics on some Hartogs type domains over bounded
symmetric domains. Ann. Glob. Anal. Geom. \textbf{47}, 305-333 (2015)







\bibitem{Kim}Kim, H., Ninh, V.T., Yamamori, A.: The automorphism group of a certain
unbounded non-hyperbolic domain. J. Math. Anal. Appl. \textbf{409}(2),
637-642 (2014)


\bibitem{KY} Kim, H., Yamamori, A.:  An application of a Diederich-Ohsawa theorem in characterizing some Hartogs domains. Bull. Sci. Math. \textbf{139}(7), 737-749 (2015)




\bibitem{Lo}Loi, A.: The Tian-Yau-Zelditch asymptotic expansion for
real analytic K\"{a}hler metrics. Int. J Geom. Methods in Modern
Phys. \textbf{1}, 253-263 (2004)

\bibitem{Loi2}Loi, A.: Bergman and balanced metrics on complex manifolds. Int. J. Geom. Methods Mod. Phys. \textbf{02}, 553 (2005)

\bibitem{Loi-Mossa}Loi, A.,  Mossa, R.: Berezin quantization of homogeneous bounded domains. Geometriae Dedicata. \textbf{161}(1), 119-128 (2012)

\bibitem{Loi-Zed}Loi, A., Zedda, M.: Balanced metrics on Hartogs domains.  Abhandlungen aus dem Mathematischen Seminar der Universit\"{a}t Hamburg. \textbf{81}(1), 69-77 (2011)

\bibitem{LZ}Loi, A., Zedda, M.: Balanced metrics on Cartan and Cartan-Hartogs domains. Math. Z. \textbf{270}, 1077-1087 (2012)

\bibitem{Loi-Zed-Zud} Loi, A., Zedda, M.,  Zuddas, F.: Some remarks on the K\"{a}hler geometry of the Taub-NUT metrics. Annals of Global Analysis and Geometry.
 \textbf{41}(4), 515-533 (2012)





\bibitem{MM07}Ma, X., Marinescu, G.: Holomorphic Morse
inequalities and Bergman kernels. Progress in Mathematics, Vol. 254,
Birkh\v{a}user Boston Inc., Boston, MA (2007)

\bibitem{MM08}Ma, X., Marinescu, G.: Generalized Bergman kernels on symplectic
manifolds. Adv. Math. \textbf{217}(4), 1756-1815 (2008)

\bibitem{MM12}Ma, X., Marinescu, G.: Berezin-Toeplitz
quantization on K\"{a}hler manifolds. J. reine angew. Math.
\textbf{662}, 1-56 (2012)





\bibitem{TW}Tu, Z.H., Wang, L.: Rigidity of proper holomorphic mappings between certain unbounded non-hyperbolic domians. J. Math. Anal. Appl. \textbf{419}, 703-714 (2014)









\bibitem{Zed}Zedda, M.: Canonical metrics on Cartan-Hartogs domains. International Journal of Geometric Methods in Modern Physics. \textbf{9}(1), (2012)


\bibitem{Zeld}Zelditch, S.: Szeg\"{o} kernels and a theorem of Tian. Internat. Math. Res. Notices. \textbf{6}, 317-331 (1998)


\end{thebibliography}
\end{document}